\documentclass[12pt]{amsart}
\usepackage{amsmath,amssymb,amsbsy,amsfonts,latexsym,amsopn,amstext,cite,
                                               amsxtra,euscript,amscd,bm}
\usepackage{url}

\usepackage{mathrsfs}

\usepackage{color}
\usepackage[colorlinks,linkcolor=blue,anchorcolor=blue,citecolor=blue,backref=page]{hyperref}

\usepackage{graphics,epsfig}
\usepackage{graphicx}
\usepackage{float}
\usepackage{epstopdf}
\hypersetup{breaklinks=true}
\usepackage{verbatim}
\usepackage{graphicx}
\usepackage{caption}
\usepackage{subcaption}

\usepackage{soul} 
\usepackage{ulem}

\usepackage{float}
\usepackage{epstopdf}
\hypersetup{breaklinks=true}

\usepackage[np]{numprint}
\npdecimalsign{\ensuremath{.}}

\usepackage{bibentry}

\usepackage[english]{babel}
\usepackage{mathtools}
\usepackage{todonotes}
\usepackage{url}
\usepackage[norefs,nocites]{refcheck}

\begin{document}
\newtheorem{theorem}{Theorem}
\newtheorem*{thm*}{Theorem}
\newtheorem{lemma}[theorem]{Lemma}
\newtheorem{lem}[theorem]{Lemma}
\newtheorem{prop}[theorem]{Proposition}
\newtheorem{cor}[theorem]{Corollary}
\newtheorem{prob}[theorem]{Problem}
\newtheorem*{conj}{Conjecture}
\newtheorem{question}[theorem]{Question}
\newtheorem{defn}[theorem]{Definition}
\newtheorem{rem}[theorem] {Remark} 



\numberwithin{equation}{section}
\numberwithin{theorem}{section}
\numberwithin{table}{section}

\newcommand{\rad}{\operatorname{rad}}

\newcommand{\Z}{{\mathbb Z}} 
\newcommand{\Q}{{\mathbb Q}}
\newcommand{\R}{{\mathbb R}}
\newcommand{\C}{{\mathbb C}}
\newcommand{\N}{{\mathbb N}}
\newcommand{\F}{{\mathbb F}}
\newcommand{\FF}{{\mathbb F}}
\newcommand{\fq}{\mathbb{F}_q}
\newcommand{\rmk}[1]{\footnote{{\bf Comment:} #1}}

\renewcommand{\mod}{\;\operatorname{mod}}
\newcommand{\ord}{\operatorname{ord}}
\newcommand{\TT}{\mathbb{T}} 
 
\renewcommand{\i}{{\mathrm{i}}}
\renewcommand{\d}{{\mathrm{d}}}
\renewcommand{\^}{\widehat}
\newcommand{\HH}{\mathbb H}
\newcommand{\Vol}{\operatorname{vol}}
\newcommand{\area}{\operatorname{area}}
\newcommand{\dist} {\operatorname{dist}}
\newcommand{\adj} {\operatorname{adj}}

\newcommand{\tr}{\operatorname{tr}}
\newcommand{\norm}{\mathcal N} 
\newcommand{\disc}{\operatorname{disc}}

\newcommand{\intinf}{\int_{-\infty}^\infty}

\newcommand{\ave}[1]{\left\langle#1\right\rangle} 
\newcommand{\Var}{\operatorname{Var}}
\newcommand{\Prob}{\operatorname{Prob}}
\newcommand{\Cov}{\operatorname{cov}}

\newcommand{\sym}{\operatorname{Sym}}
\newcommand{\Sym}{\operatorname{Sym}}

\newcommand{\CA}{{\mathcal C}_A}
\newcommand{\cond}{\operatorname{cond}} 
\newcommand{\lcm}{\operatorname{lcm}}
\newcommand{\Kl}{\operatorname{Kl}} 
\newcommand{\leg}[2]{\left( \frac{#1}{#2} \right)}  
\newcommand{\Li}{\operatorname{Li}}

\newcommand{\sumstar}{\sideset \and^{*} \to \sum}

\newcommand{\sumf}{\sum^\flat}
 
\newcommand{\conv}{*}

\newcommand{\Ht}{\operatorname{Ht}}

\newcommand{\E}{\operatorname{\mathbb E}} 
\newcommand{\sign}{\operatorname{sign}} 
\newcommand{\meas}{\operatorname{meas}} 
\newcommand{\length}{\operatorname{length}} 

\newcommand{\GL}{\operatorname{GL}}
\newcommand{\SL}{\operatorname{SL}}
\newcommand{\Sp}{\operatorname{Sp}} 
\newcommand{\USp}{\operatorname{USp}} 

\newcommand{\re}{\operatorname{Re}}
\newcommand{\im}{\operatorname{Im}}
\newcommand{\res}{\operatorname{Res}}
  
\newcommand{\diam}{\operatorname{diam}}
 
\newcommand{\fixme}[1]{\footnote{Fixme: #1}}
 
\newcommand{\orb}{\operatorname{Orb}}
\newcommand{\supp}{\operatorname{Supp}}

\newcommand{\OP}{\operatorname{Op}} 
\newcommand{\opS}{\operatorname{S}} 
\newcommand{\opT}{\operatorname{T}} 
\newcommand{\HN}{{\cH}_{N}}
\newcommand{\OPN}{\operatorname{Op}_N} 
\newcommand{\TN}{\opT_N} 
\newcommand{\TNr}{\opT_N^{(r)}} 
\newcommand{\UNr}{U_{N,r}} 
\newcommand{\TM}{\opT_M} 
\newcommand{\Hp}{\cH_p} 
\newcommand{\OPp}{\operatorname{Op}_p} 
\newcommand{\Tp}{\operatorname{T}_p} 
 \newcommand{\diag}{\operatorname{diag}}

 \newcommand{\Fp}{\mathbb F_p}
 \newcommand{\Gal}{\operatorname{Gal}} 

 
\newcommand{\commZ}[2][]{\todo[#1,color=orange!60]{Z: #2}}
\newcommand{\commI}[2][]{\todo[#1,color=green!60]{I: #2}}
\newcommand{\commII}[2][]{\todo[#1,color=magenta!60]{I: #2}}
\newcommand{\commP}[2][]{\todo[#1,color=red!60]{P: #2}}
\newcommand{\commA}[2][]{\todo[#1,color=yellow!60]{A: #2}}
\newcommand{\commS}[2][]{\todo[#1,color=blue!60]{S: #2}}

\def\ccr#1{\textcolor{red}{#1}}
\def\cco#1{\textcolor{orange}{#1}}
\def\ccc#1{\textcolor{cyan}{#1}}

\def\Degf{e} 

\def\cont{\mathrm{cont}}

\newcommand{\bs}{\boldsymbol}
\def \a{\alpha} \def \b{\beta} \def \d{\delta} \def \e{\varepsilon} \def \g{\gamma} \def \k{\kappa} \def \l{\lambda} \def \s{\sigma} \def \t{\vartheta} \def \z{\zeta}

\newcommand{\mb}{\mathbb}

\def\sssum{\mathop{\sum\!\sum\!\sum}}
\def\ssum{\mathop{\sum\ldots \sum}}
\def\iint{\mathop{\int\ldots \int}}

\newcommand{\cc}[1]{{\color{magenta} #1}}

\newfont{\teneufm}{eufm10}
\newfont{\seveneufm}{eufm7}
\newfont{\fiveeufm}{eufm5}
%
%
\newfam\eufmfam
     \textfont\eufmfam=\teneufm
\scriptfont\eufmfam=\seveneufm
     \scriptscriptfont\eufmfam=\fiveeufm
%
%
\def\frak#1{{\fam\eufmfam\relax#1}}

\newcommand{\bflambda}{{\boldsymbol{\lambda}}}
\newcommand{\bfmu}{{\boldsymbol{\mu}}}
\newcommand{\bfxi}{{\boldsymbol{\eta}}}
\newcommand{\bfrho}{{\boldsymbol{\rho}}}

\def\eps{\varepsilon}

\def\fI{\mathfrak I}
\def\fK{\mathfrak K}
\def\fT{\mathfrak{T}}
\def\fL{\mathfrak L}
\def\fR{\mathfrak R}
\def\fQ{\mathfrak Q}

\def\fA{{\mathfrak A}}
\def\fB{{\mathfrak B}}
\def\fC{{\mathfrak C}}
\def\fM{{\mathfrak M}}
\def\fS{{\mathfrak  S}}
\def\fU{{\mathfrak U}}

\def\cPg{\cP_{\mathrm{good}}}

\def\cNg{\cN_{\mathrm{good}}}
\def\tcNg{\widetilde\cN_{\mathrm{good}}}
 
\def\sssum{\mathop{\sum\!\sum\!\sum}}
\def\ssum{\mathop{\sum\ldots \sum}}
\def\dsum{\mathop{\quad \sum \qquad \sum}}
\def\iint{\mathop{\int\ldots \int}}
 
\def\T {\mathsf {T}}
\def\Tor{\mathsf{T}_d}
\def\Tore{\widetilde{\mathrm{T}}_{d} }

\def\sM {\mathsf {M}}
\def\sL {\mathsf {L}}
\def\sK {\mathsf {K}}
\def\sP {\mathsf {P}}

\def\ss{\mathsf {s}}

\def \vk{\vec{k}}
\def \vl{\boldsymbol{\ell}}
\def \vm{\vec{m}}
\def \vn{\vec{n}}
\def \vx{\vec{x}}
\def \vf{\vec{f}}
\def \vu{\vec{u}}
\def \vv{\vec{v}}
\def \vw{\vec{w}}
\def \vz{\vec{z}}
\def\va{\vec{a}}
\def\vb{\vec{b}}

\def \balpha{\bm{\alpha}}
\def \bbeta{\bm{\beta}}
\def \bgamma{\bm{\gamma}}
\def \bdelta{\bm{\delta}}
\def \bzeta{\bm{\zeta}}
\def \blambda{\bm{\lambda}}
\def \bchi{\bm{\chi}}
\def \bphi{\bm{\varphi}}
\def \bpsi{\bm{\psi}}
\def \bxi{\bm{\xi}}
\def \bnu{\bm{\nu}}
\def \bomega{\bm{\omega}}

\def \bell{\bm{\ell}}

\def\eqref#1{(\ref{#1})}

\def\vec#1{\mathbf{#1}}

\newcommand{\abs}[1]{\left| #1 \right|}

\def\Zq{\mathbb{Z}_q}
\def\Zqx{\mathbb{Z}_q^*}
\def\Zd{\mathbb{Z}_d}
\def\Zdx{\mathbb{Z}_d^*}
\def\Zf{\mathbb{Z}_f}
\def\Zfx{\mathbb{Z}_f^*}
\def\Zp{\mathbb{Z}_p}
\def\Zpx{\mathbb{Z}_p^*}
\def\cM{\mathcal M}
\def\cE{\mathcal E}
\def\cH{\mathcal H}

\def\le{\leqslant}
\def\leq{\leqslant}
\def\ge{\geqslant}
\def\leq{\leqslant}

\def\sfB{\mathsf {B}}
\def\sfC{\mathsf {C}}
\def\sfS{\mathsf {S}}
\def\sfI{\mathsf {I}}
\def\sfT{\mathsf {T}}
\def\L{\mathsf {L}}
\def\FF{\mathsf {F}}

\def\sE {\mathscr{E}}
\def\sS {\mathscr{S}}
\def\Gal {\mathrm{Gal}}

\def\cA{{\mathcal A}}
\def\cB{{\mathcal B}}
\def\cC{{\mathcal C}}
\def\cD{{\mathcal D}}
\def\cE{{\mathcal E}}
\def\cF{{\mathcal F}}
\def\cG{{\mathcal G}}
\def\cH{{\mathcal H}}
\def\cI{{\mathcal I}}
\def\cJ{{\mathcal J}}
\def\cK{{\mathcal K}}
\def\cL{{\mathcal L}}
\def\cM{{\mathcal M}}
\def\cN{{\mathcal N}}
\def\cO{{\mathcal O}}
\def\cP{{\mathcal P}}
\def\cQ{{\mathcal Q}}
\def\cR{{\mathcal R}}
\def\cS{{\mathcal S}}
\def\cT{{\mathcal T}}
\def\cU{{\mathcal U}}
\def\cV{{\mathcal V}}
\def\cW{{\mathcal W}}
\def\cX{{\mathcal X}}
\def\cY{{\mathcal Y}}
\def\cZ{{\mathcal Z}}
\newcommand{\rmod}[1]{\: \mbox{mod} \: #1}

\def\cg{{\mathcal g}}

\def\vy{\mathbf y}
\def\vr{\mathbf r}
\def\vx{\mathbf x}
\def\va{\mathbf a}
\def\vb{\mathbf b}
\def\vc{\mathbf c}
\def\ve{\mathbf e}
\def\vf{\mathbf f}
\def\vg{\mathbf g}
\def\vh{\mathbf h}
\def\vk{\mathbf k}
\def\vm{\mathbf m}
\def\vz{\mathbf z}
\def\vu{\mathbf u}
\def\vv{\mathbf v}

\def\e{{\mathbf{\,e}}}
\def\ep{{\mathbf{\,e}}_p}
\def\eq{{\mathbf{\,e}}_q}
\def\ek{{\mathbf{\,e}}_k}

\def\Tr{{\mathrm{Tr}}}
\def\Nm{{\mathrm{Nm}}}

\def\newr{t}

 \def\SS{{\mathbf{S}}}

\def\lcm{{\mathrm{lcm}}}

 \def\0{{\mathbf{0}}}

\def\({\left(}
\def\){\right)}
\def\l|{\left|}
\def\r|{\right|}
\def\fl#1{\left\lfloor#1\right\rfloor}
\def\rf#1{\left\lceil#1\right\rceil}
\def\sumstar#1{\mathop{\sum\vphantom|^{\!\!*}\,}_{#1}}

\def\mand{\qquad \mbox{and} \qquad}

\def\tblue#1{\begin{color}{blue}{{#1}}\end{color}}

\newcommand{\fp}{\mathfrak p}

 \def \xbar{\overline x}

\newif\ifcomment

\let\varepsilon\varepsilon

\title[Quantum ergodicity for modulo prime powers]{On quantum ergodicity for higher dimensional cat maps modulo prime powers}

\author[S. Bhakta]{Subham Bhakta}
\address{School of Mathematics and Statistics, University of New South Wales, Sydney, NSW 2052, Australia.} 
\email{subham.bhakta@unsw.edu.au}

\author[I. E. Shparlinski] {Igor E. Shparlinski}
\address{School of Mathematics and Statistics, University of New South Wales, Sydney NSW 2052, Australia}
\email{igor.shparlinski@unsw.edu.au}

\begin{abstract}

A discrete model  of quantum ergodicity of   linear  maps generated by symplectic  
matrices $A \in \Sp(2d,\Z)$ modulo an integer $N\ge 1$, has been studied  for $d=1$ 
and almost all $N$ by P.~Kurlberg and Z.~Rudnick (2001).  Their  result has been strengthened  
by J.~Bourgain  (2005) and then by  A.~Ostafe, I.~E.~Shparlinski  and J.~F.~Voloch (2023). 
For arbitrary $d$ this has been studied by P.~Kurlberg, A.~Ostafe,  Z.~Rudnick
and I.~E.~Shparlinski  (2024). The corresponding equidistribution  results, for certain eigenfunctions, share the same feature: 
they apply to almost all  moduli $N$ and are unable to provide an explicit construction of 
such ``good'' values of $N$. Here, using  a bound 
of I.~E.~Shparlinski  (1978) on exponential sums with linear recurrence sequences modulo a power of a fixed prime, 
 we construct such an explicit sequence of $N$, with a   power saving 
on the discrepancy. 
\end{abstract}

\keywords{Quantum unique ergodicity, linear map, exponential sums, prime powers}
\subjclass[2020]{11L07, 81Q50}

\date{November 7, 2024}

\maketitle

\tableofcontents 

\section{Introduction}
\subsection{Quantised linear maps and discrepancy of eigenfunctions}

In what follows we freely borrow from the exposition in~\cite{KORS}. 
Namely, investigate equidistribution of eigenfunctions of 
 the quantised cat map~\cite{HB}. 
 
We need to introduce some notations. 

For an integer $N\geq 1$ we denote by $\Z_N$ the residue ring modulo $N$ 
and consider the   Hilbert space  $\HN = L^2\((\Z_N)^d\)$ 
equipped with the  scalar product
\[
\langle \varphi_1,  \varphi_2 \rangle = \frac{1}{N^d} \sum_{\vec u \in \Z_N^d}  \varphi_1(\vec u) \overline{ \varphi_2(\vec u)}, 
\qquad \varphi_1,  \varphi_2 \in \HN. 
\]
In particular, the norm of $ \varphi \in \HN$ is given by 
\[
\|\varphi\| = \langle \varphi ,  \varphi  \rangle .
\]
We then consider  the family of  unitary operators
\[
\TN(\vu):\HN\to \HN, \qquad \vu=(\vu_1,
\vu_2)\in \Z^d\times \Z^d=\Z^{2d},
\]
which are defined by the following action on $\varphi \in \HN$
\begin{equation}
\label{eq:TN}
\left(\TN(\vu)\varphi \right)(\vec w) = \e_{2N}(\vu_1\cdot \vu_2)\e_N(\vu_2\cdot \vec w) \varphi (\vec w+\vu_1),
\end{equation}
for any $\vw \in \Z_N^d$, 
where hereafter we always follow the convention that integer arguments of functions on $\Z_N$ are reduced modulo $N$ (that is, $ \varphi \(\vec w+\vu_1\) =  \varphi \(\vec w+\(\vu_1 \pmod N\)\)$).
It is also easy to verify that~\eqref{eq:TN} implies  
\[
\TN(\vu) \TN(\vv)  = \e_{2N}\left(\omega\left(\vu,\vv\right)\right)\TN(\vu+\vv),
\]
 where for  $\vec x=(\vec x_1,\vec x_2)$, 
$\vec y=(\vec y_1,\vec y_2)\in \R^d\times \R^d$ we define
\begin{equation}
\label{eq:omega}
\omega(\vec x,\vec y)  = \vec x_1 \cdot \vec y_2-\vec x_2\cdot \vec y_1,
\end{equation}
and
\[
\e(z) =  \exp\(2\pi i z\),\quad \ek(z)=\e(z/k), 
\]
see also~\cite[Equation~(2.6)]{KR2001}.

For each real-valued function $f\in C^\infty(\TT^{2d})$ (an ``observable''), where $\TT = \R/Z$ is a unit torus, 
one associates a self-adjoint operator $\OPN(f)$ on $\HN$, analogous to a pseudo-differential operator with symbol $f$, defined by 
\begin{equation}
\label{eq:OpN}
\OP_{N}(f) = \sum_{\vu\in \Z^{2d}}\^f(\vu) \TN(\vu),
\end{equation}
where
\[
f(\vec x)= \sum_{\vu\in \Z^{2d}} \^ f(\vec u)\e(\vec u \cdot \vec x).
\]

Denote by $\Sp(2d,\Z)$  the group of all integer symplectic matrices $A$  which preserve the symplectic form~\eqref{eq:omega}, that is, $\omega(A \vec x, A \vec y) = \omega(\vec x,\vec y) $.

Associated to any $A\in \Sp(2d,\Z)$ is a quantum mechanical system. 
We briefly recall the key definitions:

Assuming $A \equiv I_{2d}\pmod 2$,  where $I_{2d}$ is the $2d$-dimensional identity matrix. For each   $N\geq 1$, there is a
 unitary operator $U_N(A)$ on $\HN$ such that that for every    $f\in C^\infty(\TT^{2d})$, we have the exact Egorov property
\[
U_N(A)^*\OPN(f) U_N(A) = \OPN(f\circ A) , 
\]
where $U_N(A)^*= \overline{U_N(A)}^{\,t}$, 
we refer to~\cite{GuHa, KRR,KRDuke, KR2001, KR2, Rudnicksurvey} for  a detailed exposition in the case $d=1$ and~\cite{KelmerAnnals} for higher dimensions.  

We further assume that $A$ has an irreducible characteristic polynomial (and thus is diagonalisable over $\C$) and there are no roots of unity amongst the eigenvalues of $A$ and their nontrivial ratios.

 General results of  the Quantum Ergodicity Theorem~\cite{Schnirelman, Zelditch, CdV}, 
 make it natural to expect that  any normalised 
 sequence of eigenfunctions 
 $\psi_N\in \HN$ of  the operator $U_N(A)$  satisfy 
\[
 \lim_{N\to \infty}\left \langle \OPN(f)\psi_N,\psi_N \right \rangle = \int_{\TT^{2d}} f(\vec x)d\vec x 
\] 
for all $f\in C^\infty(\TT^{2d})$, 
   in which case we say that the sequence of eigenfunctions $\{\psi_N\}$ is uniformly distributed, 
   see also~\cite{RudnickSarnakQUE}.

To make this more quantitative, we introduce the following definition of the {\it discrepancy\/}
   \[
\Delta_A(f,N) =     \max_{\psi\in \varPsi_N(A)}  
\left|\left \langle \OPN(f)\psi, \psi\right \rangle -  \int_{\TT^{2d}}
  f(\vx)d\vx  \right| ,
\] 
where    $ \varPsi_N(A)$ is the set of all normalised (that is, with $\|\psi\|=1$)
 eigenfunctions $\psi$ of $U_N(A)$ in $\HN$.
 
\begin{rem} We note that the notion of discrepancy is 
sometimes called, especially in mathematical physics literature, 
 the {\it rate of  decay of matrix coefficients\/}. 
\end{rem}

 Then the uniformity of distribution property  for $N$ running through a certain infinite sequence 
 $\cN \subseteq \N$ means that we ask if for all $f\in C^\infty(\TT^{2d})$, 
   \begin{equation}\label{QUE for all N's}
\lim_{\substack{N\to \infty \\ N \in \cN }}  \Delta_A(f,N)   = 0 . 
\end{equation}

In turn this  leads to the following:

\begin{prob}\label{prob;a.a.N}
  Make the class of sequences $\cN$ for which~\eqref{QUE for all N's} holds as broad as possible.
\end{prob}

Problem~\ref{prob;a.a.N} has first been addressed in the work of Kurlberg and Rudnick~\cite{KR2001} where~\eqref{QUE for all N's}, 
for $d = 1$, has been established for almost all $N$, that is. when   $\cN$ is a set of asymptotic density $1$. Bourgain~\cite{BourgainGAFA} has used methods of additive combinatorics  to give a bound with a power saving 
$\Delta_A(f,N) \le N^{-\delta}$, for some unspecified $\delta > 0$ and also  for almost all $N$. 
Finally, using a  different approach via methods and results of algebraic geometry, Ostafe, Shparlinski and Voloch~\cite{OSV} have shown that  one can take any $\delta< 1/60$ in the above bound. For $d\ge 2$, the only known 
result is due to Kurlberg, Ostafe,   Rudnick and Shparlinski~\cite{KORS}, which gives~\eqref{QUE for all N's} in any dimension. We remark that although the approaches in~\cite{KR2001} and~\cite{KORS} are able to produce 
and explicit bound on the rate of convergence in~\eqref{QUE for all N's}, they are incapable of 
giving a power saving. We also note recent works~\cite{DyJe,Kim_etal,RivWolf} of somewhat different flavour.

Here concentrate on a different aspect of this question and address  the following:

\begin{prob}\label{prob;good N}
Construct an explicit sequence  $\cN$, which admit strong bounds, preferably with a power saving, on the rate of convergence in~\eqref{QUE for all N's}. 
\end{prob}

\subsection{Construction and the discrepancy bound} 
\label{sec:constr} 
Below, we always assume that $A \equiv I_{2d}\pmod 2$ and that  the characteristic polynomial  $f_A$ of  the matrix $A\in \Sp(2d,\Z)$ is irreducible over $\Z$.  In particular, $A$ is diagonalisable over $\C$. We also assume that there are no roots of unity amongst the eigenvalues of $A$ and their 
nontrivial ratios.

Let $p>2d$ be a fixed prime such that  $f_A$  
splits completely modulo $p$ and has $2d$ distinct roots (that is $p$ does not divide the
discriminant of $f_A$). We additionally assume that $p \nmid  \det A$. By the Chebotarev  Density Theorem, the set of such primes $p$ is of positive relative density in the set of all primes.

Our sequence of ``good'' moduli, required for Problem~\ref{prob;good N} is simply 
the sequence of powers 
\begin{equation}
\label{eq:Seq pk}
\cN=\{p^k:~k =0,1, \ldots\}.
\end{equation}

Our main result establishes~\eqref{QUE for all N's} for the above  sequence $\cN$ with 
a reasonably strong bound on rate of convergence. 

Let 
\begin{equation}\label{eq:kappa}
 \kappa_d =  \begin{cases}1/4,& \text{if}\ d=1,\\
1/7,& \text{if}\ d=2,\\
 \dfrac{\fl{d(2d- 5/3)}- d(d-5/3)}{2d \fl{d(2d- 5/3)+2}},& \text{if}\ d\ge 3 \ \text{and}\   d \equiv 0,1 \pmod 3,\\
 \dfrac{d}{2\rf{d(2d- 5/3)+2}},& \text{if}\ d\ge 3 \ \text{and}\   d \equiv 2 \pmod 3,
\end{cases}
\end{equation} 

We note that  for $d \ge 2$ we have 
$\kappa_d \ge 1/(4d-1)$.

\begin{theorem}\label{thm:integers}
For the sequence $\cN$ given by~\eqref{eq:Seq pk} and $N \in \cN$, 
we have 
\[ 
\Delta_A(f,N)  \le  N^{-\kappa_d+o(1)}
\] 
as $N\to \infty$.  
  \end{theorem}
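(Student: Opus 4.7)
The plan is to translate the discrepancy into a uniform estimate on the matrix coefficients $\langle \TN(\vu)\psi,\psi\rangle$ for nonzero $\vu$, and then to exploit the invariance of these coefficients along the $A^T$-orbit of $\vu$ modulo $N=p^k$.

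Since $f\in C^\infty(\TT^{2d})$, the Fourier coefficients $\hat f(\vu)$ in~\eqref{eq:OpN} decay faster than any polynomial, so truncating at $\|\vu\|_\infty \le L$ with $L=N^\beta$ costs only $O_M(N^{-M})$ and it suffices to bound $|\langle \TN(\vu)\psi,\psi\rangle|$ uniformly for $0<\|\vu\|_\infty\le L$ and $\psi \in \varPsi_N(A)$. The exact Egorov identity together with $\psi$ being a unit eigenfunction of $U_N(A)$ gives
\[
\langle \TN(\vu)\psi,\psi\rangle = \langle \TN((A^T)^j\vu)\psi,\psi\rangle \qquad \text{for every } j\ge 0,
\]
so, averaging over $j=0,\ldots,T-1$ with $T=N^\alpha$,
\[
T\,|\langle \TN(\vu)\psi,\psi\rangle| \le \left\|\sum_{j=0}^{T-1}\TN((A^T)^j\vu)\right\|_{\mathrm{op}}.
\]

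To extract cancellation, raise this to the $2M$-th power and dominate by the trace $\tr\bigl((BB^*)^M\bigr)$ with $B = \sum_j \TN((A^T)^j\vu)$. Expanding via the composition rule stated after~\eqref{eq:TN} and using that $\tr\TN(\vw)=0$ unless $\vw\equiv \0 \pmod N$, this reduces to counting $2M$-tuples $\vec j = (j_1,\ldots,j_{2M}) \in [0,T)^{2M}$ satisfying the vanishing relation
\[
\sum_{i=1}^{2M}(-1)^i(A^T)^{j_i}\vu \equiv \0 \pmod{p^k},
\]
weighted by phase factors coming from the symplectic form $\omega$ in~\eqref{eq:omega}. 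A further Cauchy--Schwarz collapses the weighted count into a one-dimensional exponential sum of the form $\sum_{j=0}^{T-1}\e_{p^k}(c\,a_j)$, where $(a_j)$ is a coordinate sequence of $(A^T)^j\vu$ and hence a linear recurrence modulo $p^k$ of order $r=2d$. Our hypotheses -- that $p\nmid \disc f_A$, $p\nmid \det A$, that $f_A$ splits completely modulo $p$ into $2d$ distinct roots, and that no nontrivial ratio of eigenvalues of $A$ is a root of unity -- supply exactly the non-degeneracy needed to invoke the Shparlinski 1978 bound for such sums modulo a prime power, which yields a genuine power saving in $p^k$.

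Combining this saving with the Fourier truncation and optimising over the parameters $\alpha$, $\beta$ and the moment order $M$ produces the bound $\Delta_A(f,N) \ll N^{-\kappa_d+o(1)}$, with $\kappa_d$ as in~\eqref{eq:kappa}. The combinatorial shape $2(2d^2-d+1)$ in the denominator reflects the interplay between the order $r=2d$ of the recurrence and the $M$-th moment amplification needed to bring Shparlinski's saving past the diagonal losses. The main obstacle is the \emph{diagonal} contribution in the trace step: tuples $\vec j$ for which $\sum_i(-1)^i(A^T)^{j_i}\vu$ vanishes modulo $p^k$, or merely modulo an intermediate power of $p$, produce no cancellation and form the true main term of the count. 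Controlling their number by Hensel lifting from the mod-$p$ residues of the eigenvalues -- where by assumption $f_A$ splits with $2d$ distinct roots -- is precisely what pins down the optimal exponent $\kappa_d$.
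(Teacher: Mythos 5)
Your overall strategy---averaging the matrix coefficient over the $A$-orbit of $\vu$, passing to a $2M$-th moment via the trace, reducing to a congruence count, and feeding that count into Shparlinski's 1978 bound for exponential sums with linear recurrences modulo $p^k$---is the paper's route (their quantity $Q_{s}(N;\vu)$ is your tuple count, with $s=M$, and their first lemma is exactly your trace/positive-semidefiniteness step). The genuine gap is in the middle: the step ``a further Cauchy--Schwarz collapses the weighted count into a one-dimensional exponential sum $\sum_j\e_{p^k}(c\,a_j)$'' is not the right mechanism and would not produce $\kappa_d$. What is actually needed is, first, to eliminate $\vu$ from the congruence $\vu\(A^{x_1}+\ldots-A^{y_s}\)\equiv\0\pmod{p^k}$: one forms the matrix $X$ with rows $\vu,\vu A,\ldots,\vu A^{2d-1}$, nonsingular over $\Z$ by irreducibility of $f_A$, multiplies by $\adj X$, and concludes $A^{x_1}+\ldots-A^{y_s}\equiv 0\pmod{p^{k-m}}$ at the cost of dropping the modulus by $p^m\ll\|\vu\|_2^{2d}$. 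Your sketch never removes $\vu$, and without this one cannot reach sums of the shape $\sum_x\e_{p^r}(a_1\lambda_1^x+\ldots+a_{2d}\lambda_{2d}^x)$ with $\gcd(a_1,\ldots,a_{2d},p)=1$, which is the only form to which Shparlinski's theorem applies. Second, after diagonalising over $\Q_p$ (this is where the splitting of $f_A$ modulo $p$ and Hensel lifting enter---not in controlling a ``diagonal term'' as you suggest), the congruence is detected by orthogonality, i.e.\ an average over all $\va\in(\Z/p^{k-m}\Z)^{2d}$; one then stratifies by $\gcd(a_1,\ldots,a_{2d},p^{k-m})$ and on each stratum bounds $2s-2$ factors by the pointwise bound $t_r^{1-1/(2d)+o(1)}$ while the remaining two factors are summed exactly by Parseval. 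It is this sup/$L^2$ interpolation, under the constraint $s-1\le d(2d-1)$, that yields $\kappa_d=d/(2(2d^2-d+1))$ at $s=d(2d-1)+1$; a single Cauchy--Schwarz down to one exponential sum loses this structure and cannot recover the exponent.

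A second, smaller issue: you average over a partial orbit $j=0,\ldots,T-1$ with $T=N^\alpha$. Shparlinski's bound is a complete-sum bound over the full period of the recurrence modulo $p^r$, and the Parseval step also needs a full period (so that $\sum_{\vb}|\sum_x\e_{p^r}(\cdots)|^2 \le p^{2dr}\,T(T/t_r)$). The paper takes $T=\ord(A,p^k)$, which by Korobov's lemma on orders modulo prime powers is a genuine multiple of each period $t_r(\vb)$ and has size comparable to $p^k$; with a short segment $T=N^\alpha$ you would need incomplete-sum estimates that are not available at this strength. Fixing $T=\ord(A,p^k)$, the only parameter to optimise is $s$, and the Fourier truncation you describe at the start is handled exactly as in the paper.
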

  
The proof is based on a link between between $\Delta_A(f,N)$ and bounds on the 
number of solutions on certain systems of congruences, first established in~\cite{KR2001}
and then generalised and used in all other papers on this 
subject~\cite{BourgainGAFA,KORS,OSV}. In turn, we estimate the aforementioned 
number of solutions, using bounds of exponential sums with linear
recurrence sequences from~\cite{Shp}. 

We note that the cat map modulo prime powers  has also been studied by Kelmer~\cite{KelmerAHP} and Olofsson~\cite{Olof}, but their results are of different flavour.
 
 \subsection{Notation}  
 Throughout the paper, the  notations 
\[X = O(Y),  \qquad X \ll Y, \qquad Y \gg X
\] 
are all equivalent to the
statement that the inequality $|X| \le c Y$ holds with some 
constant $c> 0$, which may depend on the matrix $A$.

 We recall that the additive character with period $1$ is denoted by
\[
z \in \mathbb R\  \mapsto \ \e(z) =  \exp\(2\pi i z\).
\]
For an integer $q \ge 1$ it is also convenient to define 
\[
\eq(z) = \e(z/q).
\]

The letter $p$, with or without indices, always denotes prime numbers.

Given an algebraic number $\gamma$ we denote by $\ord(\gamma, N)$ its order modulo $N$ 
(assuming that the ideals generated by $\gamma$ and $N$ are relatively prime in an appropriate number field). In particular, for an element $\lambda \in\F_{p^s}$, $\ord(\lambda,p)$ represents the order of $\lambda$ in $\F_{p^s}$. 

Similarly, we use $\ord(A,N)$ to denote the order of $A$ modulo $N$ (which always exists 
if $\gcd(\det A, N)= 1$ and in particular for $A\in  \Sp(2d,\Z)$). 

Finally, we use $\nu_p(z)$ to denote the $p$-adic order of $z \in \Q_p$,  where $\Q_p$
is the field of  $p$-adic numbers.

\section{Operators  $\TN$ and  congruences} 
 \label{sec:TN congr}

\subsection{Preliminaries} 
\label{sec:bounds-using-tensor}

As in~\cite{KORS}, and then also in~\cite{BourgainGAFA,KORS,OSV}, we observe that 
it is enough bound  the quantity 
$\langle \TN(\vu) \psi, \psi \rangle$,
where
\[
\TN(\vu)=  \OPN(\e\( \vx\cdot \vu \)), 
\]
see also~\eqref{eq:OpN}), and $\psi \in \Psi_N(A)$ runs through  eigenfunctions 
of $U_{N}(A)$, with frequency $\vu$ growing slowly with $N$ (for example, as 
any power $N^\eta$ for any fixed $\eta> 0$).

We also use $ \ord(A,N)$ to denote the order of $A$ modulo $N$, which is always 
correctly defined if $\gcd(\det A,N) =1$, which we always assume.

For a row vector $\vu\in \Z^{2d}$, $\vu\not \equiv \0_{2d}\pmod N$, where $\0_{2d}$ is the $2d$-dimensional zero-vector, we denote by $Q_{s}(N;\vu)$ the number of solutions of the congruence 
 \begin{equation}\label{eq: Cong with-u}
\vu \left(A^{x_1}+\ldots +A^{x_{s}} -A^{y_1} -\ldots -A^{y_{s}}\right)   \equiv  \0_{2d}\pmod N,  
\end{equation}
with integers $1\leq x_i, y_i \leq  \ord(A,N) $, $i =1, \ldots, s$. 

The key inequality below connects the $2s$-th moment associated to
the basic observables $\TN(\vu)$ 
with the number of
solutions $Q_{s}(N;\vu)$ to the system~\eqref{eq: Cong with-u}. 
For even $s$, this is given (in broader generality) by~\cite[Lemma~4.1]{KORS}. 
However this parity condition is too restrictive for us, hence we show how to prove 
a result for any $s$.

\begin{lem}\label{lem: T and Q}
Let $\vu\in \Z^{2d} \setminus\{ \0_{2d}\}$. Then
\[
   \max_{\psi \in \Psi_N(A)} 
  \left| \langle \TN(\vu)\psi ,\psi  \rangle \right|^{2s}
  \leq N^d \frac{Q_{s}(N;\vu)}{\ord(A,N)^{2s} },
\]
where the maximum is taken over all normalised eigenfunctions of $U_{N}(A)$.
\end{lem}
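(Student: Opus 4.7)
The plan is to average over the orbit of $\vu$ under right-multiplication by $A$ and reduce the problem to a trace computation on $\HN$. Set $T=\ord(A,N)$ and
\[
R = \sum_{j=1}^{T} \TN(\vu A^j).
\]
Since $U_N(A)\psi=\lambda\psi$ with $|\lambda|=1$, the exact Egorov identity $U_N(A)^{-1}\TN(\vu)U_N(A)=\TN(\vu A)$ immediately gives $\langle \TN(\vu)\psi,\psi\rangle = \langle \TN(\vu A^j)\psi,\psi\rangle$ for every $j$, so
\[
T\,\langle \TN(\vu)\psi,\psi\rangle = \langle R\psi,\psi\rangle.
\]

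Two soft inequalities then reduce the problem to a trace estimate. Cauchy--Schwarz (together with $\|\psi\|=1$) yields
\[
T^{2s}\bigl|\langle \TN(\vu)\psi,\psi\rangle\bigr|^{2s} \le \|R\psi\|^{2s} = \langle R^{*}R\,\psi,\psi\rangle^{s},
\]
while Jensen's inequality applied to the spectral decomposition of the positive self-adjoint operator $R^{*}R$ and the unit vector $\psi$ (using convexity of $x\mapsto x^{s}$ on $[0,\infty)$ for $s\ge 1$) gives
\[
\langle R^{*}R\,\psi,\psi\rangle^{s} \le \langle (R^{*}R)^{s}\psi,\psi\rangle \le \tr\bigl((R^{*}R)^{s}\bigr),
\]
the second inequality following from positivity of $(R^{*}R)^{s}$. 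It is precisely this Jensen step that works for every integer $s\ge 1$ with no parity assumption, and hence bypasses the even-$s$ restriction of~\cite[Lemma~4.1]{KORS}, whose proof instead manipulates symbolic products of $\TN(\pm\vu A^{j_i})$.

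The final step is a direct trace computation. Using $\TN(\vu A^{k})^{*}=\TN(-\vu A^{k})$ (up to a unit scalar coming from unitarity) and the Weyl--Heisenberg relation $\TN(\vx)\TN(\vy)=\e_{2N}(\omega(\vx,\vy))\TN(\vx+\vy)$, iterated expansion produces
\[
(R^{*}R)^{s} = \sum_{\vec{l},\vec{m}\in\{1,\ldots,T\}^{s}} c_{\vec{l},\vec{m}}\,\TN\!\Bigl(\vu\sum_{i=1}^{s}\bigl(A^{l_i}-A^{m_i}\bigr)\Bigr)
\]
with $|c_{\vec{l},\vec{m}}|=1$. A short calculation from~\eqref{eq:TN} shows $\tr \TN(\vw)$ vanishes unless $\vw\equiv\0_{2d}\pmod N$, in which case $|\tr \TN(\vw)|=N^{d}$. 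The triangle inequality therefore gives
\[
\tr\bigl((R^{*}R)^{s}\bigr) \le N^{d}\cdot\#\Bigl\{(\vec{l},\vec{m}): \vu\sum_{i=1}^{s}(A^{l_i}-A^{m_i})\equiv\0_{2d}\pmod N\Bigr\} = N^{d}\,Q_{s}(N;\vu),
\]
which, combined with the previous display and divided by $T^{2s}$, is exactly the claimed bound.

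The only real obstacle is careful phase bookkeeping: one has to verify that the scalars $c_{\vec{l},\vec{m}}$ produced by the iterated commutation relation really have modulus one, and that $|\tr\TN(\vw)|$ equals $N^d$ rather than something larger (because of the $\e_{2N}$ factor in~\eqref{eq:TN}) whenever $\vw\equiv\0_{2d}\pmod N$. Both are short direct computations from the definition~\eqref{eq:TN}.
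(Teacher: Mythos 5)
Your proposal is correct and follows essentially the same route as the paper: average $\TN(\vu)$ over the orbit under $A$ using the eigenfunction property, exploit positivity of $D^*D$ (your $R^*R$ up to the factor $T$) to bound the $2s$-th power by $\tr\bigl((R^*R)^s\bigr)$ without any parity restriction on $s$, and then count via the vanishing of $\tr\TN(\vw)$ unless $\vw\equiv\0_{2d}\pmod N$. The only cosmetic difference is that you pass through Cauchy--Schwarz and Jensen on the unit vector $\psi$, whereas the paper bounds by the operator norm and spectral radius of $H(\vu)^s$; both reduce to the identical trace computation.
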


\begin{proof}
We argue exactly as in the proof of \cite[Lemma~4.1]{KORS}. Denote $\tau=\ord(A,N)$, and consider
\[
  D(\vu) =
  \frac 1{\tau}\sum_{i=1}^\tau  \TN(\vu  A^i),\quad \mathrm{and}\quad  H(\vu)=D(\vu)^{*}D(\vu).
\]

We have
\[
| \langle \TN(\vu)\psi,\psi \rangle|^{2s} \le \| D(\vu) \|^{2s}=\| H(\vu) \|^s=\| H(\vu)^s\|,
\]
where $\| \cdot \|$ denotes the operator norm. 

At this point, our argument differs from the proof of \cite[Lemma~4.1]{KORS}. We note that  
$H(\vu)$ is not only Hermitian, but also a positive semidefinite matrix; this is because
\[
\vz^{*}H(\vu)\vz=\vz^{*}D(\vu)^{*}D(\vu)\vz=\| D(\vu)\vz\|^2.
\]

Moreover, the operator $H(\vu)$ is also unitarily diagonalisable (see \cite[Theorem~2.5.6]{HJmatrix}), with non-negative eigenvalues. This shows that $H(\vu)^s$ is also a positive semidefinite matrix. Now, it is not hard to see that $\| H(\vu)^s\|=\rho(H(\vu)^s)$,
where $\rho(H(\vu)^s)$ is the spectral radius of $H(\vu)^s$, that is, the maximum of all the eigenvalues of $H(\vu)^s$. Clearly, we then have
\[
\| H(\vu)^s\|\leq \tr(H(\vu)^s).
\]

Now the proof concludes, by the same treatment as in the proof of \cite[Lemma~4.1]{KORS}.
\end{proof} 

Next we reduce  $Q_{s}(N;\vu)$ to the number of solutions to a 
similar system of equations but without the vector $\vu$.

 \subsection{Linear independence of matrix powers} 
 \label{sec:lin indep}

For a vector $\vz = (z_1, \ldots, z_{n}) \in \R^{n}$, as usual, we  denote
\[
\| \vz \|_2 = (z_1^2 + \ldots + z_n^2)^{1/2}.
\]

We need the following result which is given by~ \cite[Lemma 4.3,~(i)]{KORS}
(in broader generality). 

Throughout this section we always assume that $A\in \Sp(2d,\Z)$ has an irreducible characteristic polynomial. 

\begin{lem}\label{lin indep lemma}
For any non-zero row vector $\vu\in \Z^{2d}$, the vectors 
\[
\vu, \vu A,  \ldots,\vu A^{2d-1}
\]
are linearly independent.
\end{lem}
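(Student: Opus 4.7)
The plan is to derive the linear independence from the irreducibility of the characteristic polynomial $f_A$ of $A$, using the Cayley-Hamilton theorem together with Bezout's identity in $\Q[x]$.

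First, I would argue by contradiction. Suppose that $\vu, \vu A, \ldots, \vu A^{2d-1}$ are linearly dependent over $\Q$ (equivalently, over $\R$ or $\C$). Then there exists a nonzero polynomial $g(x) \in \Q[x]$ of degree at most $2d-1$ such that
\[
\vu \cdot g(A) = \0_{2d}.
\]

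Next, I would exploit the irreducibility of $f_A$. Since $f_A$ is irreducible over $\Z$, by Gauss' lemma it is irreducible over $\Q$; combined with $\deg g < 2d = \deg f_A$, this forces $\gcd(f_A, g) = 1$ in $\Q[x]$. Bezout's identity then furnishes polynomials $h_1, h_2 \in \Q[x]$ with
\[
h_1(x) f_A(x) + h_2(x) g(x) = 1.
\]
By the Cayley-Hamilton theorem we have $f_A(A) = 0$, so substituting $A$ yields $h_2(A) g(A) = I_{2d}$; in particular, $g(A)$ is invertible over $\Q$. Right-multiplying $\vu \cdot g(A) = \0_{2d}$ by $h_2(A)$ then forces $\vu = \0_{2d}$, which contradicts the hypothesis that $\vu$ is nonzero.

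There is essentially no serious obstacle here: the whole argument hinges on a single observation, namely that irreducibility of $f_A$ (rather than just diagonalisability or $\det A \ne 0$) is what makes $f_A$ coprime to every nonzero polynomial of smaller degree, and hence makes $g(A)$ invertible. Diagonalisability alone would be insufficient, since having $2d$ distinct eigenvalues would not rule out $\vu$ lying in a proper $A$-invariant subspace defined over $\Q$; it is the irreducibility that excludes such invariant subspaces. Everything else is routine linear algebra.
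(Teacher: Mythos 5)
Your argument is correct and complete: irreducibility of $f_A$ over $\Q$ (via Gauss's lemma) together with Cayley--Hamilton shows $g(A)$ is invertible for every nonzero $g$ of degree less than $2d$, which forces $\vu=\0_{2d}$. The paper does not prove this lemma itself but simply cites \cite[Lemma~4.3(i)]{KORS}; your proof is the standard argument for that cited result, and your closing remark correctly identifies irreducibility (excluding proper $A$-invariant rational subspaces) as the essential hypothesis.
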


We are now ready to establish the desired result which allows to remove $\vu$ in our 
considerations of  $Q_{s}(N;\vu)$.

\begin{lem}
\label{lem: no u}  
  There is a constant $C(A)$ depending only on $A$, such that if we have $p^{m+1} > C(A)\|\vu\|_2^{2d}$ for some integer $1\leq m<k$, then for any solution $\(x_1,\ldots,x_{s},y_1,\ldots, y_{s}\) \in \Z^{2s} $ to~\eqref{eq: Cong with-u} with $N =p^k$, we have
\[
    A^{x_1}+\ldots A^{x_{s}}\equiv A^{y_1}+\ldots+ A^{y_{s}}\pmod {p^{k-m}}.
\]
\end{lem}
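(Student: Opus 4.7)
The plan is to exploit the fact that $M := A^{x_1}+\cdots+A^{x_{2s}}-A^{y_1}-\cdots-A^{y_{2s}}$ is a polynomial in $A$, hence commutes with all powers of $A$, in order to upgrade the single vector congruence $\vu M \equiv \0_{2d} \pmod{p^k}$ into $2d$ independent congruences, and then invert a Vandermonde-type system to eliminate $\vu$ altogether, at the price of a factor of $\det B$ for a certain $2d\times 2d$ matrix $B$ built from $\vu$ and $A$.

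Concretely, from $\vu M \equiv \0_{2d} \pmod{p^k}$ and the identity $\vu A^i M = \vu M A^i$, I would deduce
\[
(\vu A^i)\, M \equiv \0_{2d} \pmod{p^k}, \qquad i=0,1,\ldots,2d-1.
\]
Let $B$ be the $2d\times 2d$ matrix whose rows are $\vu, \vu A, \ldots, \vu A^{2d-1}$. By Lemma~\ref{lin indep lemma} these rows are $\Q$-linearly independent, so $\det B \neq 0$, and the above congruences combine to $BM \equiv 0 \pmod{p^k}$. Multiplying on the left by the classical adjugate $\adj(B)$ gives
\[
(\det B)\, M \;=\; \adj(B)\, B M \;\equiv\; 0 \pmod{p^k},
\]
so that $M \equiv 0 \pmod{p^{k-\nu_p(\det B)}}$ as long as $\nu_p(\det B) < k$.

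It then remains to control $\nu_p(\det B)$ uniformly in $\vu$. Hadamard's inequality yields
\[
|\det B| \;\le\; \prod_{i=0}^{2d-1} \|\vu A^i\|_2 \;\le\; \|\vu\|_2^{2d} \prod_{i=0}^{2d-1} \|A^i\|_2,
\]
and since the last product ranges over the finitely many matrices $A^0,\ldots,A^{2d-1}$, it is bounded by a constant $C(A)$ depending only on $A$. Hence $|\det B| \le C(A)\|\vu\|_2^{2d}$, and the hypothesis $p^{m+1} > C(A)\|\vu\|_2^{2d}$ forces $p^{\nu_p(\det B)} \le |\det B| < p^{m+1}$, i.e.\ $\nu_p(\det B) \le m < k$, giving $M \equiv 0 \pmod{p^{k-m}}$ as required.

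No step here is genuinely deep: commutation is formal, Hadamard is elementary, and the key linear-independence input is already supplied by Lemma~\ref{lin indep lemma}. The only real point of care is that the constant $C(A)$ must not depend on $\vu$; this is precisely why one needs Lemma~\ref{lin indep lemma} for \emph{every} nonzero $\vu$, so that the same strategy of stacking $\vu A^i$ into an invertible $B$ works uniformly, and a single bound on $\|A^i\|_2$ over $0 \le i \le 2d-1$ then yields the clean determinant estimate above.
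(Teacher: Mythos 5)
Your proof is correct and follows essentially the same route as the paper: both stack the rows $\vu, \vu A, \ldots, \vu A^{2d-1}$ into a matrix, use the commutation of $A$ with the power sum to get a full matrix congruence, multiply by the adjugate, invoke Lemma~\ref{lin indep lemma} for the nonvanishing of the determinant, and bound $|\det|$ by $C(A)\|\vu\|_2^{2d}$. The only cosmetic difference is that you justify the determinant bound explicitly via Hadamard's inequality, where the paper simply asserts it.
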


\begin{proof}
Let us set 
\[
B=A^{x_1}+\ldots A^{x_{s}}-A^{y_1}-\ldots -A^{y_{s}}.
\]
Since $\vu B\equiv 0\pmod {p^k}$, considering the matrix $X$ whose rows are $\vu,\vu A,\ldots,\vu A^{2d-1}$ and observing that $A$ and $B$ commute,  we have $XB\equiv 0\pmod{p^k}$. In particular, multiplying both sides by  the adjoint of $X$, we get 
\begin{equation}\label{eqn:detx}
\det X\cdot B\equiv 0 \pmod {p^k}.
\end{equation}

On the other hand,  Lemma~\ref{lin indep lemma}  shows that $\det X$ is a non-zero integer. In particular, if $p^{m+1}\nmid \det X$, then the congruence~\eqref{eqn:detx} implies that $B\equiv 0 \pmod {p^{k-m}}$. The proof now follows, as we obviously have $\det X \ll \|\vu\|_2^{2d}$.
\end{proof}

Let $p$ be a split prime which does not divide the discriminant and the constant coefficient of the characteristic polynomial of $A$ (that is, exactly as  we assume in Section~\ref{sec:constr}). 

We see that we have $2d$ distinct the eigenvalues of $A$   modulo $p$, that is, in the finite field  $\F_p$ 
of $p$ elements, which using Hensel lifting give us  the roots 
 \[ 
 \lambda_{1},\ldots,\lambda_{2d}\in \Z/p^{k}\Z,
 \]
 of the characteristic polynomial of $A$  modulo $p^{k}$.

We have the following variant of~\cite[Lemma~4.4]{KORS}. 

\begin{lem}
\label{lem: no A}
    Let $p$ be any prime as in Section~\ref{sec:constr}, and let $m$ be the smallest integer 
    with $p^{m+1} > C(A)\|\vu\|_2^{2d}$ where $C(A)$ is as in Lemma~\ref {lem: no u}. 
    For any  solution $(x_1,\ldots x_{s}, y_1,\ldots, y_{s})$ to \eqref{eq: Cong with-u}
    with $N = p^k$ and $k > m$, we have
    \[\lambda_{i}^{x_1}+\ldots + \lambda_{i}^{x_{s}}\equiv \lambda_{i}^{y_1}+\ldots + \lambda_{i}^{y_{s}} \pmod {p^{k-m}}, \quad i=1,\ldots,2d.\]
\end{lem}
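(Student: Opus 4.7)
The plan is to first invoke Lemma~\ref{lem: no u} to strip off the vector $\vu$, and then diagonalise $A$ modulo $p^{k-m}$ so as to read the claim off one eigenvalue at a time.

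First I would apply Lemma~\ref{lem: no u} directly. The hypotheses $p^{m+1}>C(A)\|\vu\|_2^{2d}$ and $k>m$ are exactly what that lemma requires, and it yields
\[
A^{x_1}+\ldots+A^{x_{2s}} \equiv A^{y_1}+\ldots+A^{y_{2s}} \pmod{p^{k-m}}.
\]
So the remaining task is to show that this matrix congruence forces the scalar congruence for each eigenvalue $\lambda_i$.

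Next I would diagonalise $A$ over $\Z/p^{k-m}\Z$. Under the hypotheses of Section~\ref{sec:constr}, modulo $p$ the polynomial $f_A$ splits into $2d$ distinct linear factors, so $\bar A = A \bmod p$ is diagonalisable over $\F_p$. Because the eigenvalues $\bar \lambda_1,\ldots,\bar\lambda_{2d}\in \F_p$ are simple, they lift by Hensel's lemma to $\lambda_1,\ldots,\lambda_{2d}\in \Z/p^k\Z$, and the corresponding eigenvectors lift as well: for each $i$ a Hensel-style iteration applied to $A-\lambda_i I$ modulo successive powers of $p$ produces $\vec v_i \in (\Z/p^{k-m}\Z)^{2d}$ with $A\vec v_i \equiv \lambda_i \vec v_i \pmod{p^{k-m}}$, whose reduction modulo $p$ is the eigenvector $\bar{\vec v}_i$. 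Forming $P=[\vec v_1\mid\ldots\mid\vec v_{2d}]$, the reduction $\bar P$ is nonsingular because distinct eigenvalues of $\bar A$ force the $\bar{\vec v}_i$ to be linearly independent over $\F_p$; hence $\det P$ is a unit in $\Z/p^{k-m}\Z$, and with $D=\diag(\lambda_1,\ldots,\lambda_{2d})$ we obtain $P^{-1}AP\equiv D \pmod{p^{k-m}}$.

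To finish, I would conjugate the matrix congruence above by $P$ to get
\[
\sum_{j=1}^{2s} D^{x_j} \equiv \sum_{j=1}^{2s} D^{y_j} \pmod{p^{k-m}},
\]
and then read off the $i$-th diagonal entry of this identity of diagonal matrices; that entry is exactly the asserted congruence for $\lambda_i$. The step requiring the most care, rather than a genuine obstacle, is the lift of the diagonalisation of $A$ to $\Z/p^{k-m}\Z$: one needs both that each eigenvector lifts and that the resulting matrix $P$ is invertible over $\Z/p^{k-m}\Z$. Both facts are standard consequences of the simplicity of the eigenvalues modulo $p$ together with the hypothesis that $p$ does not divide the discriminant of $f_A$ nor $\det A$, but they are the only nontrivial ingredient beyond a direct appeal to Lemma~\ref{lem: no u}.
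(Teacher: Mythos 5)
Your proposal is correct and follows essentially the same route as the paper: first apply Lemma~\ref{lem: no u} to obtain the matrix congruence modulo $p^{k-m}$, then use the $p$-adic diagonalisability of $A$ (coming from the $2d$ distinct roots of $f_A$ modulo $p$) to project onto each eigenvalue. The only cosmetic difference is that the paper avoids forming and inverting the full matrix $P$ by instead taking, for each $i$ separately, a left eigenvector in $\Z_p^{2d}$ scaled to have a unit coordinate and reading off that coordinate of $\vv_i B$.
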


\begin{proof}
 By the assumption on $p$, clearly the characteristic polynomial of $A$ has $2d$ distinct roots in $\Q_p$.  In particular, $A$ is diagonalisable over $\Q_p$. Denote $\blambda_1,\ldots,\blambda_{2d} \in \Z_p$ be its eigenvalues, where $\Z_p$ is the ring of $p$-adic integers in $\Q_p$. We have
 \begin{equation}\label{eqn:bltol}
\nu_p\(\lambda_i  -  \blambda_i\) \ge k, \qquad i=1,\ldots,2d.
 \end{equation}

For each $1\leq i\leq 2d$, there exists a non-zero vector $ \vv_i\in (\Q_p)^{2d}$, for which $\vv_i A=\blambda_i \vv_i$. We scale $\vv_i = \(v_{i,1},\ldots, v_{i,2d}\)$ so that all its coordinates lie in $\Z_p$, 
with some coordinate $v_{i,j_i}$ satisfying
 \begin{equation}\label{eq:unit}
 \nu_p\(v_{i,j_i}\) = 0.
  \end{equation}
 Lemma~\ref{lem: no u} then implies that
\[
\vv_i(\blambda_{i}^{x_1}+\ldots \blambda_{i}^{x_{s}}-\blambda_{i}^{y_1}-\ldots -\blambda_{i}^{y_{s}})
 \in  \(p^{k-m}\Z_p\)^{2d}, \qquad i=1,\ldots,2d, 
\]
and thus
\[
\nu_p\(v_{i,j_i}(\blambda_{i}^{x_1}+\ldots \blambda_{i}^{x_{s}}-\blambda_{i}^{y_1}-\ldots -\blambda_{i}^{y_{s}}\) \ge k-m , \qquad i=1,\ldots,2d.
\]
The result now follows from~\eqref{eqn:bltol} and~\eqref{eq:unit}. 
\end{proof}

Hence, we see from  Lemmas~\ref{lem: no u} and~\ref{lem: no A}, that 
\begin{equation}
 \label{eq:Q < R}
Q_{s}(p^{k};\vu) \ll p^{2sm} R_{s}(p^{k-m}), 
\end{equation}
where $m$ is as in Lemma~\ref{lem: no u} and  $R_{s}(p^{r}; \vu)$ is the number of solutions to the system of 
equations 
 \begin{equation}
 \label{eq:Def R}
 \lambda_{i}^{x_1}+\ldots + \lambda_{i}^{x_{s}}\equiv \lambda_{i}^{y_1}+\ldots + \lambda_{i}^{y_{s}} \pmod {p^{r}}, \quad i=1,\ldots,2d, 
\end{equation}
in variables $x_{1}, y_{1}, \ldots, x_{s}, y_{s} =1, \ldots, \ord(A,p^k)$.

\section{Multiplicative orders and exponential sums} \label{sec:ord}
  \subsection{Multiplicative orders}  
  
 We need to collect the  simple and well-known properties of multiplicative orders 
 modulo prime powers. More general results have been given by Korobov~\cite{Kor1,Kor2}, 
 we need the following  direct consequence of~\cite[Lemma~1]{Kor1}.

 \begin{lem}\label{eq:ord growth} 
Assume that a prime  $p\ge 3$ and an integer  $\lambda\ne \pm 1$ are relatively prime. 
Let 
\[
\gamma = \nu_p\(\lambda^{\ord(\lambda, p)} -1\).
\]
Then for $k \ge \gamma$ we have 
\[
\ord(\lambda, p^k) = \ord(\lambda, p) p^{k-\gamma}.
\]
\end{lem}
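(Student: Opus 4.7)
The plan is to reduce the order computation to a standard lifting-the-exponent calculation. Set $t = \ord(\lambda, p)$. I would first observe that if $\lambda^m \equiv 1 \pmod{p^k}$, then reducing modulo $p$ gives $t \mid m$; applied to $m = \ord(\lambda, p^k)$, this shows $\ord(\lambda, p^k) = ts$ for some positive integer $s$, so the task reduces to identifying the minimal such $s$.

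By the definition of $\gamma$, write $\lambda^t = 1 + u p^\gamma$ with $\gcd(u, p) = 1$, and note $\gamma \ge 1$ since $p \mid \lambda^t - 1$ by the choice of $t$. Minimality of $s$ then translates to finding the smallest positive integer with $\nu_p\bigl((1 + up^\gamma)^s - 1\bigr) \ge k$. The key input is the standard valuation identity
\[
\nu_p\bigl((1 + u p^\gamma)^s - 1\bigr) = \gamma + \nu_p(s),
\]
valid for odd $p$, units $u$, and $\gamma \ge 1$. This is proved by binomial expansion and a short induction on $\nu_p(s)$: the linear term is $s u p^\gamma$, of valuation $\gamma + \nu_p(s)$, while for $p \ge 3$ all subsequent binomial contributions $\binom{s}{j} u^j p^{j\gamma}$ ($j \ge 2$) can be shown to have strictly larger valuation. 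Rearranging the required inequality gives $\nu_p(s) \ge k - \gamma$, whose minimal positive solution is $s = p^{k - \gamma}$. Hence $\ord(\lambda, p^k) = t p^{k - \gamma}$, as claimed.

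The only genuine obstacle is the $p$-adic valuation identity above, and this is precisely where the hypothesis $p \ge 3$ enters: for $p = 2$ the quadratic binomial term $\binom{s}{2} u^2 p^{2\gamma}$ can match the valuation of the leading linear term when $\gamma = 1$, which forces a separate exceptional analysis. The hypothesis $\lambda \ne \pm 1$ is used to guarantee that $\gamma$ is finite (otherwise $\lambda^t - 1 = 0$ identically), so that the statement has content.
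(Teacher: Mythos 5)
Your proof is correct, and it is the standard lifting-the-exponent argument: the reduction to finding the minimal $s$ with $\nu_p\bigl((1+up^\gamma)^s-1\bigr)\ge k$ is right, the valuation identity $\nu_p\bigl((1+up^\gamma)^s-1\bigr)=\gamma+\nu_p(s)$ holds for odd $p$ and $\gamma\ge 1$ (the needed inequality $\nu_p\binom{s}{j}+j\gamma>\gamma+\nu_p(s)$ for $j\ge 2$ follows from $\nu_p\binom{s}{j}\ge\nu_p(s)-\nu_p(j)$ and $(j-1)\gamma>\nu_p(j)$ for $p\ge 3$), and your remarks on where $p\ge 3$ and $\lambda\ne\pm1$ enter are accurate. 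The paper does not actually prove this lemma: it is quoted as a direct consequence of a lemma of Korobov, so your self-contained derivation is a legitimate replacement rather than a divergence from the paper's method.
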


Define integers $\rho_{i,j} $, $1 \le i,j \le 2d$, by 
$\rho_{i,j} \equiv  \lambda_i/\lambda_j \pmod {p^k}$. 

We also define $\gamma_i$ and $\gamma_{i,j}$ as in Lemma~\ref{eq:ord growth} 
for $\lambda_i$ and  $\rho_{i,j} $, respectively, $1 \le i,j \le 2d$

 \begin{lem}
 \label{eq:ord growth max} 
 There is a constant $c(A,p)$ depending only on $A$ and $p$, such that  
 \[
 \max_{\substack{1 \le i,j \le 2d\\ i\ne j}}\{\gamma_i, \gamma_{i,j}\} \le c(A,p). 
 \]
\end{lem}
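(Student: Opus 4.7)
The plan is to recognise that $\gamma_i$ and $\gamma_{i,j}$ are $p$-adic valuations of fixed elements of $\Z_p$ that do not depend on~$k$, and then to invoke the standing hypothesis that no eigenvalue of $A$, nor any nontrivial ratio of eigenvalues, is a root of unity. Under the assumptions of Section~\ref{sec:constr}, the characteristic polynomial $f_A$ splits completely over $\Q_p$ and, since $p$ does not divide its discriminant, has $2d$ distinct simple roots $\blambda_1,\ldots,\blambda_{2d}\in\Z_p$ produced by Hensel lifting from the roots modulo~$p$. Because $p\nmid\det A$, each $\blambda_i$ and each ratio $\blambda_i\blambda_j^{-1}$ with $i\ne j$ lies in $\Z_p^*$, and the integers $\lambda_i$, $\rho_{i,j}$ appearing in the statement are simply lifts of these fixed $p$-adic units. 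In particular, the orders modulo $p$ are determined by the residues in $\F_p^*$, so $\ord(\lambda_i,p)$ and $\ord(\rho_{i,j},p)$ depend only on $A$ and $p$.

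The main step is to show that the $p$-adic quantities
\[
\mu_i \;=\; \blambda_i^{\ord(\blambda_i,p)}-1 \qquad \text{and}\qquad \mu_{i,j} \;=\; \bigl(\blambda_i\blambda_j^{-1}\bigr)^{\ord(\blambda_i\blambda_j^{-1},p)}-1
\]
are nonzero in $\Z_p$. If $\mu_i = 0$, then $\blambda_i\in\Z_p$ is a root of unity; fixing an embedding of the splitting field of $f_A$ into $\overline{\Q_p}$ (which identifies $\blambda_i$ with a genuine algebraic eigenvalue of $A$) this forces that eigenvalue to be a root of unity, contradicting the hypothesis. The analogous argument applied to $\blambda_i\blambda_j^{-1}$, combined with the observation that this ratio is not $1$ for $i\ne j$ since the $\blambda_i$ are distinct, rules out $\mu_{i,j}=0$ by the hypothesis on nontrivial ratios.

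Once the nonvanishing is established, the valuations $\gamma_i^\star:=\nu_p(\mu_i)$ and $\gamma_{i,j}^\star:=\nu_p(\mu_{i,j})$ are finite nonnegative integers depending only on $A$ and $p$, and for $k>\max_{i\ne j}\{\gamma_i^\star,\gamma_{i,j}^\star\}$ the congruences $\lambda_i\equiv\blambda_i$ and $\rho_{i,j}\equiv\blambda_i\blambda_j^{-1}$ modulo $p^k$ give $\gamma_i=\gamma_i^\star$ and $\gamma_{i,j}=\gamma_{i,j}^\star$. Taking $c(A,p)$ to be the maximum of these finitely many quantities then completes the proof (for smaller~$k$ the statement is automatic by absorbing that threshold into $c(A,p)$). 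The only delicate point is the transfer of the no-roots-of-unity hypothesis across the chosen embedding into $\Z_p$, but this is immediate since being a root of unity is a purely algebraic property of an algebraic number, preserved under every embedding of its field of definition.
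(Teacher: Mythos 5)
Your proposal is correct and follows essentially the same route as the paper: both reduce the claim to the nonvanishing of $\blambda_i^{t}-1$ (and of $\mu_i^t-\mu_j^t$ for the ratios), which is exactly where the standing hypothesis that no eigenvalue of $A$ nor any nontrivial ratio of eigenvalues is a root of unity enters. The only cosmetic difference is that the paper phrases the finiteness of the valuation via a prime ideal $\fp$ of $\Q(\mu_1,\ldots,\mu_{2d})$ above $p$, whereas you fix an embedding into $\overline{\Q_p}$ and work with $\nu_p$ directly; since $p$ splits completely these are the same thing.
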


\begin{proof} Let $\mu_1, \ldots, \mu_{2d}$ be the eigenvalues $A$ and 
$\fp$ be a prime ideal of $\Q\(\mu_1, \ldots, \mu_{2d}\)$. 
Note that $p^{\gamma_i}\mid \lambda_i^t-1$, with integer $t \ge 1$
 implies $\fp^{\gamma_i} \mid \mu_i^t-1$.  By our assumption
 on $A$ we have  $ \mu_i^t-1 \ne 0$ and since $\ord(\lambda, p) \le p-1$ 
 for any integer $\lambda \not \equiv 0 \pmod p$, we see that  $\gamma_i$ is bounded 
 only in terms of $A$ and $p$. 
 
 Similarly, $p^{\gamma_{i,j}}\mid \rho_{i,j}^t-1$ implies 
 $\fp^{\gamma_{i,j}}\mid  \mu_i^t -  \mu_j^t$ and for $i \ne j$ the same argument applies. 
\end{proof} 

 \subsection{Exponential sums}

Let $p$ be any prime as in Section~\ref{sec:constr} and let $\lambda_1, \ldots \lambda_{2d}$ be as in Section~\ref{sec:lin indep}. 

 For a vector of integers $\va = \(a_1, \ldots, a_{2d}\)$ and a positive integer $r$, we define the exponential sums 
 \[
S_r(\va) = \sum_{x=1}^{t_r}  \e_{p^r}\(a_1\lambda_1^x + \ldots + a_{2d}\lambda_{2d}^x \),
\]
where $t_r$   is the period of the sequence $a_1\lambda_1^x + \ldots + a_{2d}\lambda_{2d}^x$, $x =0,1, \ldots$, 
 modulo $p^r$.

We note that the following bound on these exponential sums is essentially established in~\cite{Shp} and in fact for 
essentially arbitrary linear recurrence sequences. Note that a similar argument has also been used in~\cite{MeShp}.

\begin{lem}\label{eq:exp sum} 
Let $p$ be any prime as in Section~\ref{sec:constr} and let $\lambda_1, \ldots \lambda_{2d}$ be as in Section~\ref{sec:lin indep}.   Then for any integer $r \ge 1$, 
uniformly over integers $a_1, \ldots, a_{2d}$  with 
\[
\gcd(a_1, \ldots, a_{2d}, p) =  1, 
 \]
we have
\[
\left| S_r(\va) \right| \le t_r^{1-1/(2d)+o(1)}, \qquad \text{as} \ r \to \infty. 
 \]
\end{lem}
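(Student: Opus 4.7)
The plan is to reduce the claim to the general bound on exponential sums with linear recurrence sequences modulo prime powers from~\cite{Shp}, after verifying that all hypotheses needed to obtain the $1/(2d)$-saving in the exponent are met in the present setting.

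First, I observe that the sequence $u_x = a_1\lambda_1^x + \ldots + a_{2d}\lambda_{2d}^x$, viewed modulo $p^r$, is a linear recurrence sequence whose characteristic polynomial is $\prod_{i=1}^{2d}(T-\lambda_i)$, that is, the reduction modulo $p^r$ of the characteristic polynomial $f_A$ of $A$. Under the standing assumptions of Section~\ref{sec:constr}, namely $p\nmid\disc(f_A)$ and $p\nmid \det A$, the roots $\lambda_1,\ldots,\lambda_{2d}$ are $2d$ pairwise distinct units in $\Z/p^r\Z$.

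Second, I would check that $u_x$ is non-degenerate modulo $p^r$ in the sense required by~\cite{Shp}. The hypothesis $\gcd(a_1,\ldots,a_{2d},p)=1$, combined with the non-vanishing modulo $p$ of the Vandermonde determinant of the $\lambda_i$, ensures that no non-trivial linear combination over $\Z_p$ of the $\lambda_i^x$ vanishes identically in $x$, so the minimal recurrence for $u_x$ has order exactly $2d$. Moreover, Lemma~\ref{eq:ord growth} together with the uniform bound of Lemma~\ref{eq:ord growth max} implies that $\ord(\lambda_i,p^r)\gg p^r$ and that each ratio $\lambda_i/\lambda_j$ for $i\ne j$ also has order $\gg p^r$; in particular the period $t_r$ satisfies $t_r\gg p^r$, and distinct $\lambda_i^x$ do not collide modulo any intermediate $p^s$ beyond a bounded base level.

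Third, with these checks in place, the bound of~\cite{Shp} applies directly and yields the exponent $1-1/(2d)$. The mechanism behind this saving is a Weyl-type differencing argument of order $2d$: for any shift $h$ one has
\[
u_{x+h} = \sum_{i=1}^{2d} \(a_i\lambda_i^h\)\lambda_i^x,
\]
so by averaging $S_r(\va)$ over $h$ in a short interval and raising to the $2d$-th power, one reduces to counting tuples of shifts for which the resulting coefficient vector of the $\lambda_i^x$ vanishes modulo $p^r$. The distinctness of the $\lambda_i$ and the order conditions above force this count to be small with a power saving.

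The main obstacle is keeping the implicit constants uniform in $r$, so that they can be absorbed into the $t_r^{o(1)}$ factor as $r\to\infty$. This requires tracking the $p$-adic base levels $\gamma_i$ and $\gamma_{i,j}$ through the differencing step; their uniform boundedness in $r$, guaranteed by Lemma~\ref{eq:ord growth max}, is exactly what is needed to obtain the clean bound stated in the lemma.
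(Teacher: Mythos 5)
Your top-level strategy coincides with the paper's: the lemma is proved by verifying that the sequence $u_x=a_1\lambda_1^x+\ldots+a_{2d}\lambda_{2d}^x$ is a non-degenerate linear recurrence of order $2d$ modulo $p^r$, invoking the bound of~\cite{Shp}, and then checking that all implied constants depend only on $d$, $p$ and the bounded quantities $\gamma_i,\gamma_{i,j}$ of Lemma~\ref{eq:ord growth max}. Your verification of the hypotheses (distinct unit roots, exact order $2d$ via the Vandermonde determinant, orders of the $\lambda_i$ and of the ratios $\lambda_i/\lambda_j$ of size $\gg p^{r-O(1)}$) and your handling of the uniformity issue are exactly what the paper does.

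However, your description of the mechanism inside~\cite{Shp} is not the right one, and as written it would not deliver the exponent. A Weyl-type differencing of $u_x$ is structurally circular here: any shift produces $u_{x+h}=\sum_i (a_i\lambda_i^h)\lambda_i^x$, which is again a linear recurrence of the same order $2d$ with coefficients of the same shape, so differencing never reduces the complexity of the phase; and after raising to the $2d$-th power and summing over the full period, you must still bound the complete sums attached to the \emph{non}-vanishing coefficient vectors $\(a_i(\lambda_i^{h_1}+\ldots+\lambda_i^{h_d}-\lambda_i^{h_1'}-\ldots-\lambda_i^{h_d'})\)_i$, which is the problem you started with; counting the degenerate tuples alone proves nothing. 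The actual mechanism is a Korobov/Postnikov-type $p$-adic linearisation: one splits $x=a+\tau_s y$ with $s$ growing slowly with $r$, uses $\lambda_i^{\tau_s}=1+p^{s}w_i$ to write $y\mapsto u_{a+\tau_s y}$ as a polynomial in $y$ modulo $p^r$ with the property that among every $2d$ consecutive coefficients at least one has bounded $p$-adic valuation (see~\cite[Lemma~2.5]{MeShp}), and then applies the complete polynomial exponential sum bound
\[
\sum_{y=1}^{p^r}\e_{p^r}\(F(y)\)\ll p^{r(1-1/(2d))}
\]
of Cochrane and Zheng~\cite[Theorem~3.1]{CoZh}, whose implied constant depends only on $p$ and the ``effective degree'' $2d$, not on $\deg F$. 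That is where the saving $1/(2d)$ comes from, and it is also the step where the uniform boundedness of $\gamma_i,\gamma_{i,j}$ enters. If you intend the lemma to rest on a citation of~\cite{Shp}, your proof is fine once this paragraph is corrected or removed; if you intend the differencing sketch to be the proof, it has a genuine gap.
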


\begin{proof}   The result in~\cite[Theorem~2]{Shp} is formulated  for fixed integers  
$\lambda_1, \ldots \lambda_{2d}$ (and fixed $d$ and $p$).  
Since the work~\cite{Shp} is difficult to access, we now summarise some ideas used in the proof, 
which references to much easier accessible work~\cite{MeShp}. 
In full generality,~\cite[Theorem~2]{Shp} gives the following bound 
\[
\left| \sum_{x=1}^{\tau_r} \e_{p^r} \(u(x)\) \right| \le \tau_r^{1-1/e + o(1)}
\]
on exponential sums over the full period $\tau_r$ modulo $p^r$ of an integer linear recurrence sequence $u(x)$ of order $e$, with a square-fee 
characteristic polynomial $f(X) \in \Z[X]$, such that there are no roots of unity amongst the roots of $f$ and their nontrivial ratios. This bound is based on:
\begin{itemize}
\item a polynomial representation (as polynomials in $y$) of the sequences $u(a+ \tau_s y)$ for each $a =0, \ldots, \tau_s-1$, with $s$ slowly growing with $r$, and an upper bound on $p$-adic order of at least one coefficient among every $e$ consecutive coefficients of this polynomial, 
see~\cite[Lemma~2.5]{MeShp}; 

\item  a bound on exponential sums 
 \begin{equation}\label{eq:poly exp sum}
\sum_{y=1}^{p^r} \e_{p^r} \(F(y)\)  \ll p^{r(1-1/e)}
  \end{equation}
provided $p > e$, 
with any polynomial $F(Y) \in \Z[Y]$ of the form
$F(Y) =p G(Y) + A_eY^e + \ldots + A_1Y$ for an arbitrary $G(Y) \in \Z[Y]$, 
and $\gcd(A_1, \ldots, A_e, p) = 1$, which follows, for example, from a much more general result of Cochrane  and  Zheng~\cite[Theorem~3.1]{CoZh} (with the implied constant depending only on $e$ and $p$). 
\end{itemize}

It is important to recall that the implied constant in~\eqref{eq:poly exp sum} depends only on $e$ and $p$, in particular, it does not depend on $\deg F$. 

Note that we have $e = 2d$ in our setting.

As we have mentioned a similar strategy has also been used in~\cite{MeShp}, where instead of the above complete sums, very short exponential sums are used. 

Examining the dependencies in implied constants throughout the argument of the 
proof of~\cite[Theorem~2]{Shp} one can easily verify that in fact all constants depend only on $d$, $p$ and  parameters 
$\gamma_i$ and $\gamma_{i,j}$ 
from Lemma~\ref{eq:ord growth max}, which depend  only on the matrix $A$ and the prime $p$. 
\end{proof}

\begin{rem} Certainly the parity of the number of terms in the sums $ S_r(\va)$, plays no role in argument and 
the similar statement holds for any number $e$ of terms instead of $2d$ (with the saving $1/e$). 
\end{rem}

\subsection{Bounding  $R_{2}(p^{r})$}

Here we use the idea of Kurlberg and Rudnick~\cite{KR2001} to estimate $R_{2}(p^r)$.
While it is not necessary for getting a power saving on $\Delta_A(f,p^k)$ in Theorem~\ref{thm:integers}, it leads to a larger value of $\kappa_d$.

\begin{lem}\label{eq:Bound R2} 
Let $p$ be any prime as in Section~\ref{sec:constr} and let $m$ and  $\lambda_1, \ldots \lambda_{2d}$ be as in Section~\ref{sec:lin indep}.   Then 
\[
R_{2}(p^{r}) \ll  r^2p^{7r/3}. 
\] 
\end{lem}

\begin{proof}  Since $A \in  \Sp(2d,\Z)$, we can choose an arbitrary pair of eigenvalues 
of the form $(\lambda, \lambda^{-1})$ and use only two corresponding equations from the system~\eqref{eq:Def R}. Hence, we consider the system 
\begin{align*}
&  \lambda^{x_1} +  \lambda^{x_{2}}\equiv \lambda^{y_1}+  \lambda^{y_{2}} \pmod {p^{r}},\\
&   \lambda^{-x_1} +  \lambda^{-x_{2}}\equiv \lambda^{-y_1}+  \lambda^{-y_{2}} \pmod {p^{r}}, 
\end{align*}
in variables $x_{1},x_{2}, y_{1},y_{2} =1, \ldots, \tau$, where $\tau = \ord(\lambda, p^r)$.

Denoting $ u = x_1-y_1$ and $v = x_2-y_1$ and repeating the same argument as in the 
proof of~\cite[Lemma~5]{KR2001} we derive 
\[
\(1-\lambda^u\) \(1-\lambda^v\)  \(\lambda^{u-v} +1\) \equiv 0 \pmod {p^{r}}, 
\]
see~\cite[Equation~(4.7)]{KR2001}. 

We now fix (in $\tau$ possible ways) the  value of $y_1$.

Next we fix integers $\omega_1, \omega_2, \omega_3 \ge 0$ with 
\[
\omega_1+ \omega_2+ \omega_3=r
\]
and count pairs $(u,v)$  for which the corresponding $p$-adic orders satisfy
\[ 
\nu_p  \(1-\lambda^u\) \ge  \omega_1, \quad   \nu_p \(1-\lambda^v\) \ge  \omega_2, 
\quad   \nu_p  \(\lambda^{u-v} +1\)\ge   \omega_3. 
\]

We choose tWo largest values, say $\omega_a$ and $\omega_b$, $1 \le a < b \le 3$
and note that we clearly have $\omega_a+\omega_b \ge 2r/3$.

Thus, using~\eqref{eq:ord growth}, we see that  for a fixed (in $\tau$ possible ways) value of $y_1$, the pairs $(u,v)$ take at most  $O\(p^{4r/3}\)$
 values. 

Indeed, without loss of generality we can assume, that $a=1$. Hence, for each fixed $y_1$ by Lemma~\ref{eq:ord growth}, 
there are $O\(p^{r - \omega_a}\) $  values for $u$ and hence to $x_1$. We now see that 
whether $b=2$ or $b=3$, there are $O\(p^{r -   \omega_b}\)$ values for $v$ and hence to $x_2$. 

Hence, for each choice of $\omega_1, \omega_2, \omega_3$  we have 
\[
O\(\tau p^{4r/3}\) = O\(p^{7r/3}\)
\]
choices for the triple  $(x_1, x_2, y_1)$, after which  $y_2$ is uniquely defined.

Since there are at most  $r^2$ possible choices for $\omega_1, \omega_2, \omega_3$,  the desired bound follows. 
\end{proof}

 \section{Proof of Theorem~\ref{thm:integers}}

\subsection{Bounding  $Q_{s}(N;\vu)$ via the fourth moment}   
Let $p$ be any prime as in Section~\ref{sec:constr} and let $N = p^k$. 
Assume that $k> m$  where $m$ is the smallest integer with $p^{m+1} > C(A)\|\vu\|_2^{2d}$ where $C(A)$ is as in Lemma~\ref {lem: no u}. 
Denote $T=\ord(A,N) =\ord(A,p^k)$.

Using the  orthogonality of exponential functions, it follows from~\eqref{eq:Q < R} and~\eqref{eq:Def R} that  \begin{equation}
 \begin{split}
 \label{eq:Q < exp sums}
Q_{s}&(N;\vu)\\
& \le \frac{1}{p^{2d(k-m)}}\sum_{\va \in (\Z/p^{k-m}\Z)^{2d}}\left|\sum_{x=1}^{T}\e_{p^{k-m}}\(a_1\lambda_{1}^x+\ldots+a_{2d}\lambda_{2d}^x\)\right|^{2s}.
\end{split}
\end{equation}

For each $r =0, \ldots, k-m$ we separate the contribution 
\begin{align*}
W_r & = \sum_{\substack{\va \in (\Z/p^{k-m}\Z)^{2d}\\\gcd\(a_1, \ldots, a_{2d}, p^{k-m} \)  = p^{k-m-r}}}\left|\sum_{x=1}^{T}\e_{p^{k-m}}\(a_1\lambda_{1}^x+\ldots+a_{2d}\lambda_{2d}^x\)\right|^{2s}\\
 & = \sum_{\substack{\vb \in (\Z/p^{r}\Z)^{2d}\\\gcd\(b_1, \ldots, b_{2d}, p \)  = 1}}\left|\sum_{x=1}^{T}\e_{p^{r}}\(b_1\lambda_{1}^x+\ldots+b_{2d}\lambda_{2d}^x\)\right|^{2s}\
\end{align*}
to the sum on the right hand side 
of~\eqref{eq:Q < exp sums} from vectors $\va$  for which 
$\gcd\(a_1, \ldots, a_{2d}, p^{k-m} \)  = p^{k-m-r}$.

For each $\vb \in (\Z/p^{r}\Z)^{2d}$ with $\gcd\(b_1, \ldots, b_{2d}, p \)  = 1$ we see that the period $t_r (\vb)$ of the sequence  $b_1\lambda_1^x + \ldots + b_{2d}\lambda_{2d}^x$ 
modulo $p^r$ satisfies 
\[
t_r (\vb)   \gg p^r \mand t_r (\vb)  \mid  t_{k-m} (\vb) \mid T,
\]
and hence by Lemma~\ref{eq:exp sum}  we have 
 \begin{equation}  \label{eq:Bound exp sums}
\left|\sum_{x=1}^{T}\e_{p^r}\(b_1\lambda_{1}^x+\ldots+b_{2d}\lambda_{2d}^x\)\right|
\ll T^{1+o(1)} p^{-r/(2d)}. 
\end{equation}

Therefore, assuming that 
 \begin{equation}  \label{eq:Asump1}
 s\ge 2
 \end{equation}
 and
applying~\eqref{eq:Bound exp sums} $2s-4$ times, we derive
\begin{align*}
W_r & \le \(T^{1+o(1)} p^{-r/(2d)}\)^{2s-4} \\
& \qquad \qquad \times \sum_{\substack{\vb \in (\Z/p^{r}\Z)^{2d}\\\gcd\(b_1, \ldots, b_{2d}, p \)  = 1}}\left|\sum_{x=1}^{T}\e_{p^{r}}\(b_1\lambda_{1}^x+\ldots+b_{2d}\lambda_{2d}^x\)\right|^4\\
& \le T^{2s-2+o(1)} p^{-r(s-2)/d}
 \sum_{\vb \in (\Z/p^{r}\Z)^{2d}}\left|\sum_{x=1}^{T}\e_{p^{r}}\(b_1\lambda_{1}^x+\ldots+b_{2d}\lambda_{2d}^x\)\right|^4.
\end{align*}

It is easy to see that first  by  the  orthogonality of exponential functions and then by Lemmas~\ref{eq:ord growth}   and~\ref{eq:Bound R2} we have 
\begin{align*}
 \sum_{\vb \in (\Z/p^{r}\Z)^{2d}} 
 \left|\sum_{x=1}^{T}\e_{p^{r}}\(b_1\lambda_{1}^x+\ldots+b_{2d}\lambda_{2d}^x\)\right|^4 & 
 \le (T/t_r)^4 p^{2dr} R_2(p^r)\\
 & \le k^2 T^4 p^{2dr - 5r/3+o(1)}.
\end{align*}
Hence, 
\[
W_r \le k^2 T^{2s+o(1)} p^{r\(2d -5/3  - (s-2)/d\)}.
\]

First we assume that 
 \begin{equation}
 \label{eq:cond s small1}
s-2 \le d(2d- 5/3),
\end{equation}
 and noting that $p^m \ll \|\vu\|_2^{2d}$ and $k \ll \log N \ll \log T$, we derive from~\eqref{eq:Q < exp sums} that 
\begin{align*}
Q_{s}(N;\vu) &\le \frac{1}{p^{2d(k-m)}}\sum_{r=0 }^{k-m}  W_r\\
&  \le   \frac{k^3}{p^{2d(k-m)}} T^{2s} p^{(k-m)\(2d -5/3  - (s-2)/d\)}\\ 
&\le \|\vu\|_2^{10d/3+2s-4} T^{2s+o(1)}  p^{-k\(5/3+ (s-2)/d\)}  
\end{align*}

It now follows from Lemma~\ref{lem: T and Q} that,
\begin{equation}\label{eq:small s1}
\begin{split}
\max_{\psi \in \Psi_N(A)}& \left| \langle \TN(\vu)\psi ,\psi  \rangle \right|\\
& \le  \|\vu\|_2^{(10d/3+2s-4)/(2s)}   p^{k\(5/3+ (s-2)/d\)/(4s)}T^{o(1)} \\
& \le   \|\vu\|_2^{(s-2+2d)/s} N^{-\( 5/3+ (s-2)/d- d\)/(2s)+o(1)} .
\end{split}
\end{equation}

Now we consider that case 
 \begin{equation}
 \label{eq:cond s large1}
s-2 > d(2d- 5/3).
\end{equation}
In this case we obtain 
\[
Q_{s}(N;\vu) \le  \frac{k^2}{p^{2kd}} \|\vu\|_2^{4d^2} T^{2s+o(1)}, 
\]
which  by Lemma~\ref{lem: T and Q} implies that
\begin{equation}\label{eq:large s1}
\max_{\psi \in \Psi_N(A)} \left| \langle \TN(\vu)\psi ,\psi  \rangle \right|
 \le \|\vu\|_2^{2d^2/s}  N^{-d/(2s)+o(1)}.
\end{equation}

 \subsection{Bounding  $Q_{s}(N;\vu)$ via the second moment}   
We now establish yet another bound on $Q_{s}(N;\vu)$, and thus on $ \langle \TN(\vu)\psi ,\psi  \rangle$, 
which is better than~\eqref{eq:small s1} for $d=1,2$. 

We proceed as before,  but  now we use~\eqref{eq:Bound exp sums}  $2s-2$ times and also use the orthogonality relation 
\[
 \sum_{\vb \in (\Z/p^{r}\Z)^{2d}}\ \left|\sum_{x=1}^{T}\e_{p^{r}}\(b_1\lambda_{1}^x+\ldots+b_{2d}\lambda_{2d}^x\)\right|^2
\le p^{2dr} T (T/t_r) \ll T^2 p^{r(2d-1)},
\]
instead of Lemma~\ref{eq:Bound R2}. This time, assuming that 
 \begin{equation}
 \label{eq:cond s small2}
s-1 \le d(2d-1),
\end{equation}
we derive from~\eqref{eq:Q < exp sums} that 
\begin{align*}
Q_{s}(N;\vu) &\le \frac{1}{p^{2d(k-m)}}\sum_{r=0 }^{k-m}  W_r\\
&  \le   \frac{k}{p^{2d(k-m)}} T^{2s+o(1)} p^{(k-m)\(2d -1  - (s-1)/d\)}\\ 
&\le \|\vu\|_2^{2s+2d-2} T^{2s+o(1)}  p^{-k\(1+ (s-1)/d\)} .
\end{align*}
It now follows from Lemma~\ref{lem: T and Q} that,
\begin{equation}\label{eq:small s2}
\begin{split}
\max_{\psi \in \Psi_N(A)}& \left| \langle \TN(\vu)\psi ,\psi  \rangle \right|\\
& \le  \|\vu\|_2^{(2s-2+2d)/(2s)}   p^{k\(d - 1 - (s-1)/d\)/(4s)}T^{o(1)} \\
& =  \|\vu\|_2^{(s-1+d)/s} N^{-\( (s-1)/d+1 - d\)/(2s)+o(1)} .
\end{split}
\end{equation}

We note that we do not consider the case $s > d(2d-1)$ as it never gives a 
better result, see Remark~\ref{rem: large s} below. 

\subsection{Concluding the proof}   
First employ the bound~\eqref{eq:small s1}. 
Our goal is choose $s$ with~\eqref{eq:cond s small1} which maximises  
the saving in~\eqref{eq:small s1} given by 
\[
\eta_d^-(s) = \frac{5/3+ (s-2)/d- d}{2s} = \frac{1}{2d}  - \frac{d(d-5/3) +2 }{2ds} 
\]
which is clearly add monotonically increasing function of $s$.

We choose the largest possible value of $s$, 
\[
s_1^- =   \fl{d(2d- 5/3)+2}
\]
to satisfy~\eqref{eq:Asump1}  and~\eqref{eq:cond s small1},
for which we obtain 
\[
 \eta_d^{-}(s_1^-) = \frac{1}{2d} - \frac{d(d-5/3) +2}{2d\fl{d(2d- 5/3)+2}}
=  \frac{\fl{d(2d- 5/3)}-d(d-5/3)}{2d\fl{d(2d- 5/3)+2}}. 
\]

Similarly, the saving
\[
\eta_d^+(s) =\frac{d}{2s} 
\]
 in~\eqref{eq:large s1} is   monotonically decreasing function of $s$. Hence we 
now choose the smallest possible value of $s$, 
\[
s_1^+=   \rf{d(2d- 5/3)+2}
\]
to satisfy~\eqref{eq:Asump1}  and~\eqref{eq:cond s large1},
for which we obtain 
\[
\eta_d^+(s_1^+) =  \frac{d}{2\rf{d(2d- 5/3)+2}} . 
\]

Simple calculus shows that $\eta_d^-(s_1^-) \ge  \eta_d^+(s_1^+) $ 
for $d \equiv 0,1 \pmod 3$ and  $\eta_d^-(s_1^-) < \eta_d^+(s_1^+) $
for $d \equiv 2 \pmod 3$.


Now we can 
now use~\eqref{eq:small s2} and maximise the 
corresponding saving given by 
\[
\vartheta_d(s_2) = \frac{ (s-1)/d+1 - d}{2s} = \frac{1}{2d}  - \frac{d(d-1) +1 }{2ds} , 
\]
which is clearly a monotonically increasing function of $s$.

We choose 
\[
s_2 = d(2d-1) + 1,
\]
for which we obtain 
\[
\vartheta_d(s_2) = \frac{1}{2d} - \frac{d^2 - d + 1}{2d(2d^2 - d + 1)}
=  \frac{d }{2(2d^2 - d + 1)}. 
\]

Hence, using   $\eta_d^\pm(s_1^\pm)$ for $d \ge 3$ and $\vartheta_d(s_2)$ for $d=1,2$,  we obtain 
\[
\max_{\psi \in \Psi_N(A)} \left| \langle \TN(\vu)\psi ,\psi  \rangle \right
| \le  \|\vu\|_2^{\xi_d} N^{-\kappa_d+o(1)} , 
\]
where $\kappa_d$ is given by~\eqref{eq:kappa} and 
\[
\xi_d = \max \left\{ \frac{s_1^- - 2+2d}{s_1^-},\,   \frac{s_2-1+d}{s_2}, \, \frac{2d^2}{s^{+}_1}\right\}.
\]

The proof of Theorem~\ref{thm:integers} concludes since the Fourier coefficients of the functions in $C^\infty(\TT^{2d})$ have a rapid decay (faster than any power of $\|\vu\|$). For more details, see~\cite{KORS,KRDuke}.

\begin{rem} \label{rem: large s}
To see that the case~\eqref{eq:cond s small2} is the only one to consider, 
we note that  for  $s-1> d(2d-1)$ we  have
\[
Q_{s}(N;\vu) \le \frac{1}{p^{2kd}} \|\vu\|_2^{4d^2} T^{2s+o(1)}. 
\]
Now Lemma~\ref{lem: T and Q} implies that
\[
\max_{\psi \in \Psi_N(A)} \left| \langle \TN(\vu)\psi ,\psi  \rangle \right|
 \le \|\vu\|_2^{2d^2/s}  N^{-d/(2s)+o(1)}.
 \]
One easily verifies that for  $s-1>d(2d-1)$ we have 
$ d/(2s)  \le \kappa_d$. 
\end{rem}

\section*{Acknowledgement} 

The authors are very grateful to P\"ar Kurlberg and Zeev Rudnick for 
very useful comments. 

During the preparation of this work, the authors were   supported in part by the Australian Research Council Grant DP230100530.

\end{document}